\documentclass{amsart}
\usepackage{amsmath, amscd, amssymb, amsthm}
\usepackage{bbm}
\usepackage{latexsym}
\usepackage{amsfonts}
\usepackage{graphicx}
\usepackage[all,cmtip]{xy}
\usepackage[colorlinks,linkcolor=blue,breaklinks=blue,urlcolor=blue,citecolor=blue,anchorcolor=blue,pagebackref]{hyperref}%
\setcounter{MaxMatrixCols}{30}
\usepackage{geometry}
\geometry{left=3.5cm,right=3.5cm,top=2.8cm,bottom=2.5cm}

%TCIDATA{OutputFilter=latex2.dll}
%TCIDATA{Version=4.10.0.2363}
%TCIDATA{LastRevised=Tuesday, March 01, 2011 20:49:44}
%TCIDATA{<META NAME="GraphicsSave" CONTENT="32">}
%TCIDATA{Language=American English}
%[section]
\newtheorem{lemma}{Lemma}

\newtheorem{proposition}{Proposition}

\renewcommand*\backref[1]{}
\renewcommand*\backrefalt[4]{ \ifcase #1 \or (cited on page #2) \else (cited on pages #2) \fi}%(no citations)

\newcommand{\be}{\begin{equation}}
\newcommand{\ee}{\end{equation}}
\newcommand{\bea}{\begin{eqnarray}}
\newcommand{\eea}{\end{eqnarray}}

\newcommand{\vs}{\vspace{0.5cm}}

\def\XXint#1#2#3{{\setbox0=\hbox{$#1{#2#3}{\int}$ }
\vcenter{\hbox{$#2#3$ }}\kern-.6\wd0}}

\begin{document}

\title[Streets-Tian Conjecture on Lie algebras]{Streets-Tian Conjecture on Lie algebras with codimension $2$ abelian ideals}

\author{Kexiang Cao}
\address{Kexiang Cao. School of Mathematical Sciences, Chongqing Normal University, Chongqing 401331, China}
\email{{ caokx1214@qq.com}}\thanks{Cao is supported by Chongqing graduate student research grant No.\,CYB240231. The corresponding author Zheng is partially supported by NSFC grants 12141101 and 12471039, by Chongqing grant cstc2021ycjh-bgzxm0139, by Chongqing Normal University grant 19XRC001, and is supported by the 111 Project D21024.}

\author{Fangyang Zheng}
\address{Fangyang Zheng. School of Mathematical Sciences, Chongqing Normal University, Chongqing 401331, China}
\email{20190045@cqnu.edu.cn; franciszheng@yahoo.com} \thanks{}

\subjclass[2020]{53C55 (primary)}
\keywords{Streets-Tian Conjecture; Hermitian-symplectic metrics; pluriclosed metrics; Lie algebras with codimension $2$ abelian ideals}

\begin{abstract}
A Hermitian-symplectic metric is a Hermitian metric whose K\"ahler form is given by the $(1,1)$-part of a closed $2$-form. Streets-Tian Conjecture states that a compact complex manifold admitting a Hermitian-symplectic metric must be K\"ahlerian (i.e., admitting a K\"ahler metric). The conjecture is known to be true in dimension $2$ but is open in dimensions $3$ or higher in general, except in a number of special situations, such as twistor spaces (Verbitsky), Fujiki ${\mathcal C}$ spaces (Chiose), Vaisman manifolds (Angella-Otiman), etc.  For Lie-complex manifolds (namely, compact quotients $G/\Gamma$ of  Lie groups by  discrete subgroups with left-invariant complex structures), the conjecture has also been confirmed in a number of special cases, including when $G$ is nilpotent (Enrietti-Fino-Vezzoni), when $G$ is completely solvable (Fino-Kasuya), or when $J$ is abelian (Fino-Kasuya-Vezzoni), or $G$ is almost abelian  (Fino-Kasuya-Vezzoni, Fino-Paradiso), etc. In this article, we conduct a detailed case analysis and confirm Streets-Tian Conjecture for $G$ whose Lie algebra contains an abelian ideal of codimension $2$. Such Lie algebras are always solvable of step at most $3$, but are not $2$-step solvable and not completely solvable in general. Our approach is explicit in nature by describing both the Hermitian-symplectic metrics on such Lie algebras and the pathways of deforming them into K\"ahler ones, in hope of advancing our understanding of the subtlety and intricacy of this interesting conjecture in non-K\"ahler geometry. 
\end{abstract}

\maketitle

\tableofcontents

\section{Introduction and statement of results}\label{intro}

An interesting problem in Hermitian geometry is the following conjecture of  Streets and Tian (\cite{ST}), which states that any compact complex manifold admitting a Hermitian-symplectic metric must be K\"ahlerian, i.e., it admits a K\"ahler metric. Streets and Tian introduced the notion of {\em Hermitian-symplectic metrics,} which means a Hermitian metric $g$ whose K\"ahler form $\omega$ is the $(1,1)$-part of a closed $2$-form. That is, there exists a global $(2,0)$-form $\alpha$ on the manifold so that $\Omega = \alpha + \omega + \overline{\alpha}$ is closed. Equivalently, this means a compact complex manifold $M^n$ admitting a symplectic form (i.e., non-degenerate closed $2$-form) $\Omega$ so that $\Omega (x, Jx)>0$ for any non-zero tangent vector $x$. For this reason, a Hermitian-symplectic structure is also called a {\em symplectic structure taming a complex structure $J$} (\cite{EFV}).

Hermitian-symplectic metrics are a natural mix of Hermitian and symplectic structures, two classic objects of study in geometry. By definition, it is clear  that any Hermitian-symplectic metric would satisfy the condition $\partial \overline{\partial} \omega =0$, namely, it is {\em pluriclosed,}  a type of special Hermitian metrics that has been extensively studied (see for instance the excellent survey paper \cite{FinoTomassini} and the references therein). 
In a series of papers, Streets and Tian developed the important theory of Hermitian curvature flow (\cite{ST11}, see also \cite{Streets,  ST12, ST13}), and Hermitian-symplectic metrics play a big role in the case of pluriclosed flow, which is a special case of Hermitian curvature flow. Recently, Ye \cite{Ye} proved that the Hermitian-symplectic  property is preserved under the pluriclosed flow. Hermitian-symplectic metrics also enjoy some nice properties, for instance, it is stable under small deformations (\cite[Ch.12]{OV}), and the existence of a Hermitian-symplectic metric would imply the existence of a {\em strongly Gauduchon} metric \cite[Lemma 1]{YZZ}, which means (see \cite{Popovici}) a Hermitian metric whose K\"ahler form $\omega$ satisfies $\partial \omega^{n-1} = \overline{\partial} \Phi$ for some $(n,n-2)$-form $\Phi$, where $n$ is the complex dimension.

The Streets-Tian Conjecture is known to be true in complex dimension $2$ (\cite{LiZhang, ST}). See also \cite{Donaldson} for a related more general conjecture in real dimension $4$. In dimensions $3$ or higher, the conjecture is open in general but known to be true in several special cases. For instance, Verbitsky \cite{Verbitsky} showed that any non-K\"ahlerian twistor space does not admit any pluriclosed metric; Chiose \cite{Chiose} proved that any Fujiki ${\mathcal C}$ class manifold (i.e., compact complex manifolds bimeromorphic to compact K\"ahler manifolds) does not admit any pluriclosed metric; Fu, Li and Yau in \cite{FuLiYau} proved that an important  special type of non-K\"ahler Calabi-Yau threefolds does not admit any pluriclosed metric; Di Scala-Lauret-Vezzoni in \cite{DLV} proved that compact Chern flat manifolds do not admit any pluriclosed metrics. Since Hermitian-symplectic metrics are always pluriclosed,  we know that {\em Streets-Tian Conjecture holds for all twistor spaces, all Fujiki ${\mathcal C}$ class manifolds, all the special non-K\"ahler Calabi-Yau threefolds of Fu-Li-Yau, and all compact Chern flat manifolds.}

Recently, Angella and Otiman \cite{AOt} give a systematic study of special Hermitian metrics on Vaisman manifold. In particular,  they showed that any (non-K\"ahler) Vaisman manifold does not admit any Hermitian-symplectic metric (as well as a number of other interesting special types of Hermitian metrics). So {\em Streets-Tian Conjecture holds for all Vaisman manifolds.}  
Recall that  Vaisman manifolds are a special type of  locally conformally K\"ahler manifolds. More precisely, a Vaisman manifold is a Hermitian manifold $(M^n,g)$ whose K\"ahler form $\omega$ satisfies $d\omega = \psi \wedge \omega$ for a closed $1$-form $\psi$, and $\psi$ is parallel under the Levi-Civita connection. The book \cite{OV} forms an encyclopedia for locally conformally K\"ahler and Vaisman manifolds.

In a recent work \cite{GuoZ2},  Guo and the second named author confirmed Streets-Tian Conjecture for all compact non-balanced Bismut torsion parallel (BTP) manifolds. A Hermitian metric is BTP if its Bismut connection has parallel torsion (\cite{ZhaoZ22, ZhaoZ24}), and it is balanced if Gauduchon's torsion $1$-form (\cite{Gau84}) vanishes, or equivalently, if $d(\omega^{n-1})=0$ where $\omega$ is the K\"ahler form and $n$ is the complex dimension. By the result of Andrada and Villacampa \cite{AndV}, Vasiman manifolds are always (non-balanced) BTP. The class of non-balanced BTP manifolds also include all Bismut K\"ahler-like (BKL) manifolds as a subset, the latter means that the curvature of the Bismut connection obeys all K\"ahler symmetries. See \cite{YZZ, ZZ-Crelle, ZZ-JGP} for more discussions on such metrics. \cite{GuoZ2} also showed that Streets-Tian Conjecture holds for all  Chern K\"ahler-like manifolds, meaning the curvature of Chern connection obeys all K\"ahler symmetries. This includes  Chern flat manifolds as a subset.

Next let us focus on  {\em Lie-complex manifolds}: compact quotients $M=G/\Gamma$ of Lie groups by discrete subgroups, with the complex structure (when lifted onto $G$) being left-invariant. When $G$ is nilpotent (solvable), we shall call $M$ a complex {\em nilmanifold} ({\em solvmanifold}).  An important supporting evidence to Streets-Tian Conjecture is the theorem of Enrietti, Fino and Vezzoni \cite{EFV} which states that any complex nilmanifold cannot admit a Hermitian-symplectic metric, unless it is a complex torus. So {\em Streets-Tian Conjecture holds for all complex nilmanifolds} (see also \cite{AN} for an important supplement). Nilmanifolds form a large class of special Hermitian manifolds, with rich topological varieties and are often used as a testing ground in non-K\"ahler geometry. 

Fino and Kasuya confirmed Streets-Tian Conjecture \cite{FK} for all completely solvable groups (meaning that $\mbox{ad}_x$ has only real eigenvalues for any $x$ in the corresponding Lie algebra). In \cite{FKV}, Fino-Kasuya-Vezzoni also confirmed the conjecture in a number of cases, including all Oeljeklaus-Toma manifolds, and all Lie-complex manifolds with either abelian complex structure $J$ or with $J$-invariant nilpotent complements, and all almost abelian manifolds that are either in real dimension $6$ or not of type (I), meaning that there is $x\in {\mathfrak g}$ such that $\mbox{ad}_x$ has an eigenvalue with non-zero real part. In \cite{FP4}, Fino and Paradiso confirmed the conjecture for all almost abelian Lie algebras. 

In \cite{GuoZ2}, Guo and the second named author also confirmed Streets-Tian Conjecture for Lie-complex manifolds $G/\Gamma$ where the Lie algebra ${\mathfrak g}$ of $G$ contains an abelian ideal ${\mathfrak a}$ of codimension $2$, with $J{\mathfrak a} = {\mathfrak a}$. This leaves us the with the natural wondering of what happens when $J{\mathfrak a} \neq {\mathfrak a}$? The main purpose of this article is to confirm Streets-Tian Conjecture  in this case:

\begin{proposition} \label{thm1}
Let $M^n=G/\Gamma $ be a compact complex manifold which is the quotient of a Lie group $G$ by a discrete subgroup $\Gamma$, where the complex structure $J$ of $M$ (when lifted onto $G$) is left-invariant. Assume that the Lie algebra ${\mathfrak g}$ of $G$ contains an abelian ideal ${\mathfrak a}$  of codimension $2$. Assume that $J{\mathfrak a} \neq {\mathfrak a}$. If $M$ admits a Hermitian-symplectic metric, then it admits a (left-invariant) K\"ahler metric. 
\end{proposition}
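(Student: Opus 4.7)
The plan is to reduce the statement to a problem on the Lie algebra ${\mathfrak g}$ and then follow the strategy announced in the abstract of describing the Hermitian-symplectic metrics explicitly and exhibiting a pathway deforming them into K\"ahler ones. Since $J$ is left-invariant and $M = G/\Gamma$ is compact (so $G$ is unimodular, and solvable of step at most $3$ by the codimension-$2$ abelian ideal assumption), a symmetrization argument in the spirit of \cite{EFV, FKV} should replace any Hermitian-symplectic metric on $M$ by a left-invariant one, so that the taming symplectic form $\Omega = \alpha + \omega + \bar{\alpha}$ may be assumed left-invariant.

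Next, set ${\mathfrak b} = {\mathfrak a} \cap J{\mathfrak a}$. Since ${\mathfrak b}$ is $J$-invariant (hence even-dimensional) and $J{\mathfrak a} \neq {\mathfrak a}$, dimension counting forces $\dim_{\R}{\mathfrak b} = 2n-4$ and ${\mathfrak a} + J{\mathfrak a} = {\mathfrak g}$. Pick $e_{n-1}, e_n \in {\mathfrak a}$ projecting to a basis of ${\mathfrak a}/{\mathfrak b}$ and set $f_j = Je_j$. The Lie bracket on ${\mathfrak g}$ is then completely encoded by the two operators $A = \mbox{ad}_{f_{n-1}}|_{{\mathfrak a}}$ and $B = \mbox{ad}_{f_n}|_{{\mathfrak a}}$ together with the decomposition $[f_{n-1}, f_n] = c_{n-1} f_{n-1} + c_n f_n + v$ with $v \in {\mathfrak a}$, subject to the Jacobi relation $[A, B] = c_{n-1} A + c_n B$; unimodularity of $G$ further imposes $\tr(A) = -c_n$ and $\tr(B) = c_{n-1}$.

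With a complex frame $\{W_j\}$ adapted to this basis and dual coframe $\{w^j\}$, the condition $d\Omega = 0$ decomposes into $\partial \alpha = 0$ and $\bar\partial \alpha + \partial \omega = 0$, and expanding these in the coefficients of $\omega$ and $\alpha$ produces an explicit linear system in terms of $A, B, c_{n-1}, c_n, v$. The analysis then splits on $(c_{n-1}, c_n)$. If $(c_{n-1}, c_n) = 0$, then ${\mathfrak g}/{\mathfrak a}$ is abelian, $A, B$ are trace-free and commute, and the $(2,1)$-component of the closedness equation should force $A, B$ to be skew-Hermitian with respect to $\omega|_{{\mathfrak b}}$; after which a direct rescaling of the ${\mathfrak b}$-block of $\omega$ yields a K\"ahler metric, in the spirit of the $J$-invariant case handled in \cite{GuoZ2}. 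If $(c_{n-1}, c_n) \neq 0$, then ${\mathfrak g}/{\mathfrak a}$ is the two-dimensional non-abelian Lie algebra, $G$ is no longer completely solvable, and the Fino-Kasuya result \cite{FK} does not apply directly.

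The main obstacle is expected to be this second subcase. The strategy there is to combine the Jacobi constraint and the trace identities with the $(2,1)$-component $\bar\partial \alpha + \partial \omega = 0$ to strongly restrict the joint Jordan structure of $(A, B)$, and then to invoke that a Hermitian-symplectic metric is strongly Gauduchon (\cite[Lemma 1]{YZZ}) to either derive a contradiction that rules out $(c_{n-1}, c_n) \neq 0$ and reduces back to the first subcase, or to exhibit an explicit one-parameter path $\omega_t = \omega + t\chi$ of positive Hermitian forms whose endpoint is K\"ahler. The bulk of the work should lie in the bookkeeping of the $(n-1, n)$-row and column coefficients of the closedness system and in verifying positivity of $\omega_t$ throughout the deformation.
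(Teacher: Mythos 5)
Your reduction to left-invariant data (via the averaging of \cite{EFV}) and the dichotomy on whether ${\mathfrak g}/{\mathfrak a}$ is abelian match the paper's overall architecture, and the setup via $A=\mbox{ad}_{f_{n-1}}|_{\mathfrak a}$, $B=\mbox{ad}_{f_n}|_{\mathfrak a}$ with $[A,B]=c_{n-1}A+c_nB$ is a legitimate starting point. But the proposal stops short of, or goes wrong at, exactly the two places where the real work lies. In the case ${\mathfrak g}/{\mathfrak a}$ non-abelian you offer only the alternative ``derive a contradiction or exhibit a path to K\"ahler,'' with no argument for either branch. The paper proves that this case admits no Hermitian-symplectic metric at all, by a specific chain: the $(2,1)$-part of $d\Omega=0$ forces the block $Y_1$ of the adjoint action on ${\mathfrak a}\cap J{\mathfrak a}$ to be Hermitian-symmetric, while the Jacobi identity $[Y_1,Y_2]=\frac{i\sigma}{\sqrt{2}\delta'}\,Y_1$ with $\sigma\neq 0$ forces $Y_1$ to be nilpotent, hence $Y_1=0$; unimodularity then excludes the ``generic'' subcase, and in the remaining subcases one shows successively $v^1_1=0$ and $\xi=0$, contradicting $T^1_{12}\neq 0$. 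Nothing in your sketch points toward this (the strongly Gauduchon property is never needed), and you never invoke the integrability of $J$, which is essential: it is what produces the constraints $a^2+bc=0$, $bc'=-a(c+\sigma)$, etc.\ governing the $2\times 2$ blocks $E_\alpha$.

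Your treatment of the abelian case is also incorrect as stated. The closedness equations force the relevant operators to be Hermitian-\emph{symmetric} (real spectrum), not skew-Hermitian; and within this case there are subcases (the paper's $r_0=1$, where the blocks $E_\alpha$ are nonzero) in which no K\"ahler metric exists, so no deformation or rescaling can succeed. The correct resolution there is that no Hermitian-symplectic metric exists either: after a unitary change of $e_1,e_2$ the structure equation gives $d\varphi_2=-\overline{D^1_{21}}\,\varphi_1\wedge\overline{\varphi}_1$, producing a nontrivial $d$-exact non-negative $(1,1)$-form, which is incompatible with a taming symplectic form by pairing against $\Omega^{n-1}$. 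Your ``rescale the ${\mathfrak b}$-block'' step cannot see these subcases. Even in the surviving subcase ($r_0=0$) a rescaling does not suffice: one must prove that the mixed components satisfy $v^\beta_\alpha\in\mbox{Im}(Y_\alpha)$, which requires the full taming system including the skew-symmetric $(2,0)$-part $S$ and the identity $(I+S^{\ast}S)\,\overline{v^\beta_\alpha}=iY_\beta\overline{u}_\alpha+Y^{\ast}_\alpha S^{\ast}u_\beta$, and then perform a shear $\tilde e_\alpha=e_\alpha+\sum_i a_{\alpha i}e_i$ of the frame (i.e.\ change the metric, not merely rescale it) to reach a K\"ahler one. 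As written, the proposal is a reasonable outline of the setup but does not contain the proof.
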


In other words, by combining the above with \cite[Proposition 2]{GuoZ2}, we know that Streets-Tian Conjecture holds for all Lie-complex manifolds when the Lie algebra contains an abelian ideal of codimension $2$, a natural generalization to the almost abelian case. The latter was actively studied in recent years by many, see for instance \cite{AO, AL, Bock, CM, FP1, FP2, FP3, GuoZ, LW, Paradiso, Podesta, Ugarte}.

Note that when a Lie algebra ${\mathfrak g}$ contains an abelian ideal of codimension $2$, it will always be solvable of step at most $3$, but in general it won't be $2$-step solvable or completely solvable. In particular the above statement is not covered by the aforementioned work of Fino-Kasuya, Fino-Kasuya-Vezzoni, or Fino-Paradiso on special solvmanifolds.

Unlike almost abelian Lie algebras, those  with codimension $2$ abelian ideals present adequate algebraic complexity when Hermitian structures are involved. In \cite{CaoZ}, we were able to verify Fino-Vezzoni Conjecture on such Lie algebras, but we only had partial description of balanced or pluriclosed metrics there. Fino-Vezzoni Conjecture states that a compact complex manifold admitting a balanced metric and a pluriclosed metric must be K\"ahlerian. It is closely related to Streets-Tian Conjecture, but these two conjectures do not imply each other. 

Suppose that a Lie algebra ${\mathfrak g}$ contains an abelian ideal ${\mathfrak a}$ of codimension $2$. It can be divided into two cases: when ${\mathfrak g}/{\mathfrak a}$ is non-abelian or when it is abelian. Now let us consider all the complex structures $J$ on ${\mathfrak g}$ such that  $J{\mathfrak a}\neq {\mathfrak a}$. In the first case, namely when ${\mathfrak g}/{\mathfrak a}$ is non-abelian, they  can be further divided into three subcases: generic, half-generic, or degenerate. In the second case, namely when ${\mathfrak g}/{\mathfrak a}$ is abelian, they can be divided into three subcases, depending on whether $r_0=0$, $r_0=1$, or $r_0=2$. Here $r_0$ denotes the dimension of $({\mathfrak a}_J + [{\mathfrak g}, {\mathfrak g}])/{\mathfrak a}_J$, where $ {\mathfrak a}_J = {\mathfrak a} \cap J{\mathfrak a}$. 

In each subcase the Hermitian geometric behavior is somewhat different, and the existence of Hermitian-symplectic metrics translates into a system of matrix equations. Through case analysis, one can write down explicitly those $J$ which admits Hermitian-symplectic metrics, and find out explicit pathways to deform them into K\"ahler ones. In this sense our argument is elementary and explicit, not relying on the powerful symplectic Lie algebra theory but rather focusing more on the Hermitian geometric aspects. We hope these detailed analysis could be useful in the study of other Hermitian geometric problems on such manifolds. 

Lie algebras with codimension $2$ abelian ideals form a rich yet special class of Hermitian manifolds which already present  adequate algebraic complexity. Verifying Streets-Tian Conjecture in an explicit manner for this particular class could therefore  illustrate both the technical difficulty as well as the subtlety and intricacy of the conjecture.

\vspace{0.3cm}

\section{Lie-Hermitian manifolds and Hermitian-symplectic metrics}

In this section we will recall the basics of Hermitian Lie algebras and the characterization of Hermitian-symplectic metrics on them. It will be parallel to Section 3 of \cite{GuoZ2}, but we will repeat some of the discussions here for the convenience of the readers and also to make the presentation self-contained.

First let us recall some basics for Lie-Hermitian manifolds. We refer the readers to \cite{VYZ, GuoZ, CaoZ} for more details. By Lie-Hermitian manifolds we mean compact Hermitian manifolds $(M^n,g)$ in the form $M=G/\Gamma$, where $G$ is an even-dimensional Lie group, $\Gamma$ is a discrete subgroup,  $J$ is a left-invariant complex structure on $G$, and $g$ is a left-invariant Riemannian metric on $G$ compatible with $J$. 

Note that the compactness of $M$ forces $G$ to be unimodular, and $G$ admits a bi-invariant measure \cite{Milnor}. Using this measure, Fino and Grantcharov \cite{FG} proved that, given any pluriclosed metric $g_0$ on $M$, by averaging it over the manifold with respect to the aforementioned measure, one can produce another pluriclosed metric $g$ which is now left-invariant. Similarly, Enrietti, Fino and Vezzoni proved in \cite{EFV} that if $M$ admits a Hermitian-symplectic metric, then through averaging one would get a left-invariant  Hermitian-symplectic metric. (In this regard, the averaging trick for balanced metric was due to Ugarte in \cite{Ugarte}). Therefore, in order to verify Streets-Tian Conjecture for Lie-Hermitian manifolds, it suffices to consider Hermitian-symplectic metrics that are left-invariant. 

Denote by ${\mathfrak g}$  the Lie algebra of $G$. As is well-known, left-invariant complex structures on $G$ are in one-one correspondence to complex structures on ${\mathfrak g}$, which means a linear transformation $J$ on ${\mathfrak g}$ satisfying $J^2=-I$ and the integrability condition
\begin{equation} \label{integrability}
[x,y] - [Jx,Jy] + J[Jx,y] + J[x,Jy] =0, \ \ \ \ \ \ \forall \ x,y \in {\mathfrak g}. 
\end{equation}
Likewise, left-invariant metrics on $G$ correspond to inner products on ${\mathfrak g}$, and compatibility with complex structure simply means the inner product will make $J$ orthogonal. 

Let us denote by ${\mathfrak g}^{\mathbb C}$ the complexification of ${\mathfrak g}$, and by ${\mathfrak g}^{1,0}= \{ x-\sqrt{-1}Jx \mid x \in {\mathfrak g}\} \subseteq {\mathfrak g}^{\mathbb C}$. Then (\ref{integrability}) is equivalent to the condition that ${\mathfrak g}^{1,0}$ is a complex Lie subalgebra of ${\mathfrak g}^{\mathbb C}$. From now on, we will extend the metric $g=\langle , \rangle $ bilinearly over ${\mathbb C}$, and a basis $e$ of the complex vector space ${\mathfrak g}^{1,0}$ will be called a {\em frame} for the Hermitian Lie algebra $( {\mathfrak g}, J, g)$. Let $e=\{ e_1, \ldots , e_n\}$ be a frame of ${\mathfrak g}$, and denote by $\{ \varphi_1, \ldots , \varphi_n\}$ the dual coframe, namely, a basis of the dual complex vector space $({\mathfrak g}^{1,0})^{\ast}$ such that $\varphi_i(e_j)=\delta_{ij}$, $\forall \ 1\leq i,j\leq n$. Following \cite{VYZ}, we will use
\begin{equation*} 
C^j_{ik} = \varphi_j( [e_i,e_k]), \ \ \ \ \ \  D^j_{ik} = \overline{\varphi}_i( [\overline{e}_j, e_k] )
\end{equation*}
to denote the structure constants, namely, under the frame $e$ we have
\begin{equation*} \label{CD}
[e_i,e_k] = \sum_j C^j_{ik}e_j, \ \ \ \ \ [e_i, \overline{e}_j] = \sum_k \big( \overline{D^i_{kj}} e_k - D^j_{ki} \overline{e}_k \big) .
\end{equation*}
Equivalently, one has the structure equation
\begin{equation} \label{structure}
d\varphi_i = -\frac{1}{2} \sum_{j,k} C^i_{jk} \,\varphi_j\wedge \varphi_k - \sum_{j,k} \overline{D^j_{ik}} \,\varphi_j \wedge \overline{\varphi}_k, \ \ \ \ \ \ \forall \ 1\leq i\leq n.
\end{equation}
Note that ${\mathfrak g}$ is unimodular if and only if $\mbox{tr}(ad_x)=0$ for any $x\in {\mathfrak g}$, which is equivalent to
\begin{equation} \label{unimodular}
{\mathfrak g} \ \, \mbox{is unimodular}  \ \ \Longleftrightarrow  \ \ \sum_r \big( C^r_{ri} + D^r_{ri}\big) =0 , \, \ \forall \ i.
\end{equation}
So far we did not assume that the frame $e$ is unitary, namely, the matrix  $g=(g_{i\bar{j}})=(\langle e_i, \overline{e}_j\rangle )$ is equal to the identity matrix. To get simplified formula, we will assume that $e$ is unitary from now on (without this assumption many of the formula will be clumsier and contain $g$ and its inverse here and there).  Denote by $\nabla$ the Chern connection and by $T$ its torsion tensor. We have
\begin{equation*} \label{Gamma}
\nabla e_i = \sum_j \theta_{ij} e_j, \ \ \ \ \theta_{ij} = \sum_k \big( \Gamma^j_{ik} \varphi_k - \overline{\Gamma^i_{jk}}\, \overline{\varphi}_k \big), \ \ \ \ \ \Gamma^j_{ik} = D^j_{ik},
\end{equation*}
where $\theta$ is the connection matrix of $\nabla$ under $e$. The torsion tensor $T$ of $\nabla$ has components
\begin{equation*} \label{torsion}
T( e_i, \overline{e}_j)=0, \ \ \ \ T(e_i,e_j)  = \sum_k T^k_{ij}e_k, \ \ \ \  \ \ T^j_{ik}= - C^j_{ik} -  D^j_{ik} + D^j_{ki}.
\end{equation*}
Differentiate the structure equation (\ref{structure}), we get the  first Bianchi identity, which is equivalent to the Jacobi identity in this case:
\begin{equation} \label{Jacobi}
\left\{ 
\begin{split} 
  \sum_r \big( C^r_{ij}C^{\ell}_{rk} + C^r_{jk}C^{\ell}_{ri} + C^r_{ki}C^{\ell}_{rj} \big) \ = \ 0,  \hspace{3.1cm}\\
  \sum_r \big( C^r_{ik}D^{\ell}_{jr} + D^r_{ji}D^{\ell}_{rk} - D^r_{jk}D^{\ell}_{ri} \big) \ = \ 0,  \hspace{2.9cm} \\
  \ \ \sum_r \big( C^r_{ik}\overline{D^r_{j \ell}}  - C^j_{rk}\overline{D^i_{r \ell}} + C^j_{ri}\overline{D^k_{r \ell}} -  D^{\ell}_{ri}\overline{D^k_{j r}} +  D^{\ell}_{rk}\overline{D^i_{jr}}  \big) \ = \ 0 . 
\end{split} 
\right. 
\end{equation}
For our later purpose of verifying Streets-Tian Conjecture for Lie algebras, we need the following characterization of Hermitian-symplectic metrics which is \cite[Lemma 3]{GuoZ2}, and we include its short proof here for readers convenience.
\begin{lemma}[\cite{GuoZ2}] \label{lemma1}
Let $({\mathfrak g}, J, g)$ be a Lie algebra equipped with a Hermitian structure. The metric $g$ is Hermitian-symplectic if and only if for any given unitary frame $e$, there exists a skew-symmetric matrix $S$ such that
\begin{equation} \label{eq:HS}
\left\{ \begin{split} 
 \sum_r \big( S_{ri}C^r_{jk} + S_{rj} C^r_{ki}  + S_{rk} C^r_{ij} \big) =0,  \\
 \sum_r \big( S_{rk}\overline{D^i_{rj}} - S_{ri} \overline{D^k_{rj}} \big) = - \sqrt{-1}T^j_{ik}, \,     \end{split} \right. \ \ \ \ \ \ \ \ \ \ \ \ \ \forall\ 1\leq i,j,k\leq n. 
 \end{equation}
\end{lemma}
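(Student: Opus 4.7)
The plan is to invoke the averaging trick of Enrietti--Fino--Vezzoni recalled at the start of Section~2, which lets us assume that both $g$ and the taming $2$-form $\Omega = \alpha + \omega + \overline{\alpha}$ are left-invariant, so that Hermitian-symplecticity becomes a linear-algebraic condition on $({\mathfrak g}, J, g)$. Writing the left-invariant $(2,0)$-form $\alpha$ in the chosen unitary coframe as $\alpha = \tfrac{1}{2}\sum_{i,j} S_{ij}\,\varphi_i\wedge\varphi_j$ with $S=(S_{ij})$ skew-symmetric, we have $\alpha(e_r,e_k)=S_{rk}$, while $\omega(e_a,\overline{e}_b) = \sqrt{-1}\,\delta_{ab}$ because the frame is unitary. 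The condition $d\Omega=0$ decomposes by type into a $(3,0)$ equation, a $(2,1)$ equation, and their conjugates, so only the first two need to be analyzed.

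Both equations come out of the Maurer--Cartan formula
\[
d\Omega(X,Y,Z) = -\Omega([X,Y],Z) - \Omega([Y,Z],X) - \Omega([Z,X],Y),
\]
valid for any left-invariant $2$-form. For the $(3,0)$ piece I plug in $(e_i,e_j,e_k)$: only the $\alpha$-part of $\Omega$ contributes (since $\omega$ is of type $(1,1)$), and $[e_a,e_b]=\sum_r C^r_{ab}e_r$ combined with $\alpha(e_r,e_\ell)=S_{r\ell}$ immediately produces the cyclic sum $\sum_r(S_{rk}C^r_{ij}+S_{ri}C^r_{jk}+S_{rj}C^r_{ki})=0$, which is the first line of \eqref{eq:HS}.

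For the $(2,1)$ piece I plug in $(e_i,e_j,\overline{e}_k)$ and use $[e_i,e_j]=\sum_r C^r_{ij}e_r$ together with $[e_a,\overline{e}_b]=\sum_r(\overline{D^a_{rb}}\,e_r - D^b_{ra}\,\overline{e}_r)$. Each of the three Maurer--Cartan terms then splits into an $S$-contribution (from $\alpha$) and an imaginary contribution (from $\omega$, via $\Omega(e_a,\overline{e}_b)=\sqrt{-1}\,\delta_{ab}$). Gathering everything and recognizing the combination $-C^k_{ij}-D^k_{ij}+D^k_{ji}$ as the torsion component $T^k_{ij}$, a cyclic relabeling $(i,j,k)\mapsto(k,i,j)$ together with the skew-symmetry of $T^{\cdot}_{\cdot\cdot}$ in its two lower indices yields precisely the second line of \eqref{eq:HS}. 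The $(0,3)$ and $(1,2)$ components of $d\Omega=0$ are simply the complex conjugates of the two relations just obtained and therefore add no independent constraint.

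The argument is entirely algebraic and the only real care required is index bookkeeping; there is no substantive obstacle. The point of Lemma~\ref{lemma1} is to recast the closedness of $\alpha+\omega+\overline{\alpha}$ as a pair of linear equations in the unknown skew matrix $S$, and this linearization is what will make the case-by-case analysis carried out in the remainder of the paper tractable.
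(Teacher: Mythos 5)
Your proof is correct and takes essentially the same route as the paper's: both decompose $d\Omega=0$ for $\Omega=\alpha+\omega+\overline{\alpha}$ into its $(3,0)$ and $(2,1)$ parts and read off the two equations of the lemma, the only difference being that you work in the dual picture (evaluating $d\Omega$ on frame vectors via the Chevalley--Eilenberg formula) while the paper expands $\partial\alpha$, $\overline{\partial}\alpha$, and $\partial\omega$ through the structure equations for the coframe. The opening appeal to the averaging trick is superfluous here --- the lemma is stated entirely at the Lie-algebra level, where $\alpha$ is invariant by definition --- but harmless.
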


\begin{proof}
Let $\varphi$ be the unitary coframe dual to $e$. By definition, $g$ will be Hermitian-symplectic if and only if there exists a $(2,0)$-form $\alpha$ so that $\Omega = \alpha + \omega + \overline{\alpha}$ is closed. This means that $\partial \alpha =0$ and $\overline{\partial}\alpha = - \partial \omega$. Write $\alpha = \sum_{i,k} S_{ik}\varphi_i\wedge \varphi_k$, where $S$ is a skew-symmetric matrix. Then we have
\begin{eqnarray*}
 \partial \alpha & = &  2\sum_{r,k} S_{rk} \partial \varphi_r \wedge \varphi_k \ = \ - \sum_{r,i,j,k} S_{rk} C^r_{ij} \, \varphi_i\wedge \varphi_j \wedge \varphi_k \\
 & = & -\frac{1}{3} \sum_{i,j,k} \{ \sum_r \big(  S_{rk} C^r_{ij} + S_{rj} C^r_{ki} + S_{ri} C^r_{jk} \big)\}  \,\varphi_i\wedge \varphi_j \wedge \varphi_k. 
\end{eqnarray*}
So by $\partial \alpha =0$ we get the first identity in (\ref{eq:HS}). Similarly, we have
\begin{eqnarray*}
 \overline{\partial} \alpha & = &  2\sum_{r,k} S_{rk} \overline{\partial}\varphi_r \wedge \varphi_k \ = \ - 2 \sum_{r,i,j,k} S_{rk} \overline{D^i_{rj}} \, \varphi_i\wedge \overline{\varphi}_j \wedge \varphi_k \\
 & = & - \sum_{i,j,k} \{ \sum_r \big( S_{rk} \overline{D^i_{rj}} - S_{ri} \overline{D^k_{rj}} \big)\}  \, \varphi_i\wedge \overline{\varphi}_j \wedge \varphi_k. 
\end{eqnarray*}
On the other hand,
$$ \partial \omega = \sqrt{-1} \,^t\!\tau \wedge \overline{\varphi} = \sqrt{-1} \sum_{i,j,k} T^j_{ik} \,\varphi_i\wedge \varphi_k \wedge \overline{\varphi}_j , $$
so by comparing the above two lines we get the second identity in (\ref{eq:HS}). This completes the proof of Lemma \ref{lemma1}.
\end{proof}

For our later proofs, we will need another lemma:

\begin{lemma} \label{lemma1b}
Let $(M^n,g)$ be a compact Hermitian manifold. If $g$ is Hermitian-symplectic, then any non-negative $(p,p)$-form $\Phi$ which is $d$-exact  must be trivial.
\end{lemma}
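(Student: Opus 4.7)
The plan is to pair $\Phi$ with a suitable closed form built from the Hermitian-symplectic structure, so that Stokes' theorem gives a vanishing integral, and then to extract pointwise vanishing via a positivity argument. Let $\Omega = \alpha + \omega + \overline{\alpha}$ be the closed real $2$-form provided by the Hermitian-symplectic condition, where $\alpha$ is a global $(2,0)$-form. Then $\Omega^{n-p}$ is $d$-closed, and writing $\Phi = d\Psi$, Stokes' theorem immediately gives
$$ \int_M \Phi \wedge \Omega^{n-p} \ = \ \int_M d\big( \Psi \wedge \Omega^{n-p}\big) \ = \ 0. $$

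The core pointwise observation is that the taming condition $\Omega(v, Jv) > 0$ is inherited by every complex subspace: for any complex $(n-p)$-dimensional subspace $W \subset T_xM$, the restriction $\Omega|_W$ is a symplectic form taming $J|_W$, so $\Omega^{n-p}|_W$ is a strictly positive top form on $W$. Since the bidegree components of $\Omega^{n-p}$ other than $(n-p, n-p)$ restrict trivially to any complex subspace, this exhibits the $(n-p, n-p)$-part of $\Omega^{n-p}$ as a transversely positive $(n-p, n-p)$-form. Wedging against the non-negative (strongly positive) $(p,p)$-form $\Phi$ therefore yields a pointwise non-negative top form $\Phi \wedge \Omega^{n-p}$. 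Since its integral vanishes, it must vanish pointwise; the strict positivity of $\Omega^{n-p}$ on every complex $(n-p)$-subspace then forces $\Phi = 0$: if $\Phi_x \neq 0$ at some $x$, decomposing $\Phi_x$ in a unitary coframe produces a strongly positive elementary summand $i\beta_1\wedge\overline{\beta}_1\wedge\cdots\wedge i\beta_p\wedge\overline{\beta}_p$ which, when wedged with $\Omega^{n-p}$, picks up $\Omega^{n-p}$ restricted to the complementary complex $(n-p)$-subspace and thus produces a strictly positive top form at $x$, a contradiction.

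The main obstacle is the careful positivity bookkeeping for middle bidegrees $2 \leq p \leq n-2$, where weakly positive and strongly positive $(p,p)$-forms need not coincide. The argument requires the correct interpretation of \emph{non-negative} $(p,p)$-form together with the standard Harvey-Knapp / Demailly wedge-positivity lemma for forms of complementary bidegree, used to guarantee that the product of a strongly positive $(p,p)$-form with the transversely positive $(n-p,n-p)$-form $\Omega^{n-p}$ is itself non-negative.
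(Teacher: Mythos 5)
Your proof is correct and follows essentially the same route as the paper: pair $\Phi$ with the closed form $\Omega^{n-p}$, kill the integral by Stokes, and conclude from the positivity of the $(n-p,n-p)$-component of $\Omega^{n-p}$. The only difference is in how that positivity is certified: the paper expands $(\Omega^{n-p})^{n-p,n-p}=\omega^{n-p}+\sum_{j\geq 1} c_j\,\omega^{n-p-2j}(\alpha\overline{\alpha})^j\geq \omega^{n-p}$ (with $c_j>0$ and each $(\alpha\overline{\alpha})^j$ non-negative), so that it only needs $\int_M\Phi\wedge\omega^{n-p}=0\Rightarrow\Phi=0$ for the genuine metric form $\omega$, which sidesteps the weak/strong positivity bookkeeping you carry out via restriction to complex $(n-p)$-planes and the Harvey--Knapp duality.
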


\begin{proof}
Let $\omega$ be the K\"ahler form of $g$. By assumption, there exists a $(2,0)$-form $\alpha$ so that the $2$-form $\Omega = \alpha + \omega + \overline{\alpha}$ is $d$-closed. Clearly, $\alpha \overline{\alpha}$ is non-negative, so is $(\alpha \overline{\alpha})^k$ for any $k$. The $(k,k)$-part of $\Omega^k$ is
$$ (\Omega^k)^{k,k} = \omega^k + C_k^2C_2^1\omega^{k-2} (\alpha \overline{\alpha}) + C_k^4C_4^2\omega^{k-4} (\alpha \overline{\alpha})^2 + \cdots \geq \omega^k. $$
Therefore by taking $k=n-p$ we get
$$ \int_M \Phi \wedge \Omega^{n-p} = \int_M \Phi \wedge (\Omega^{n-p})^{n-p,n-p} \geq \int_M \Phi \wedge \omega^{n-p} \geq 0. $$
The left hand side is zero since $\Phi$ is $d$-exact and $\Omega$ is $d$-closed, while the far right hand side is zero only if $\Phi$ is trivial since $\omega$ is a metric. This completes the proof of the lemma.
\end{proof}

\vspace{0.3cm}

\section{Lie algebras with codimension $2$ abelian ideals}

In this section, we will recall the properties for Lie algebras containing abelian ideals of codimension $2$, following the discussions and computations in our previous work \cite{CaoZ}. We will only collect the part that will be needed for our  proof of Proposition \ref{thm1} later, and we refer the interested  readers to \cite{CaoZ} for more details.

Throughout this section, we will assume that {\em  ${\mathfrak g}$ is a unimodular Lie algebra of real dimension $2n$ containing an abelian ideal ${\mathfrak a}$ of codimension $2$,  $J$  is a complex structure on ${\mathfrak g}$, and $g=\langle , \rangle $ is an inner product on ${\mathfrak g}$ compatible with $J$. We will also assume that $J{\mathfrak a} \neq {\mathfrak a}$.}

Note that we excluded the $J{\mathfrak a} = {\mathfrak a}$ case here because its algebraic behavior is quite different and the validity of Streets-Tian Conjecture for it has been established in \cite{GuoZ2}. We will divide the discussion into two cases, depending on whether the $2$-dimensional Lie algebra ${\mathfrak g}/{\mathfrak a}$ is abelian or not.

\vspace{0.2cm}

\noindent {\bf Case 1:} $J{\mathfrak a} \neq {\mathfrak a}$ and ${\mathfrak g}/{\mathfrak a}$ is not abelian.

\vspace{0.2cm}

Following \cite{CaoZ}, let us introduce the notations  
\begin{equation*} \label{a'andb}
  {\mathfrak a}_J:= {\mathfrak a} \cap J {\mathfrak a}, \ \ \ \ \ {\mathfrak a}':=  {\mathfrak a} + [{\mathfrak g}, {\mathfrak g}], \ \ \ \ \ {\mathfrak b} = {\mathfrak a} \cap J {\mathfrak a}'.
\end{equation*}
Under our assumption that $J{\mathfrak a}\neq  {\mathfrak a}$ and ${\mathfrak g}/{\mathfrak a}$ is non-abelian, we know that ${\mathfrak a}_J$  and ${\mathfrak a}'$ will respectively have codimension $4$ and $1$ in ${\mathfrak g}$. By \cite[Lemma 3]{CaoZ} we have
$${\mathfrak a}_J \subsetneq {\mathfrak b}  \subsetneq {\mathfrak a} \subsetneq {\mathfrak a}' \subsetneq {\mathfrak g} .$$
Furthermore, $x\in {\mathfrak b}\setminus {\mathfrak a}_J$ would imply $Jx \in {\mathfrak a}' \setminus {\mathfrak a}$ and $y\in {\mathfrak a}\setminus {\mathfrak b}$ would imply $Jy \notin {\mathfrak a}' $. The Lie algebra with complex structures $({\mathfrak g},J)$ (or for simplicity, we simply say $J$) will be divided into three mutually  disjoint subsets:
\begin{equation*}
J \ \mbox{is said to be}  \left\{ \begin{split}  \mbox{{\em generic}, \ \ \ \ \ \ if} \ [J{\mathfrak b}, {\mathfrak b}] \not\subseteq {\mathfrak b}; \hspace{2.9cm} \\
  \mbox{{\em half-generic}, \ \ \ if}  \ [J{\mathfrak b}, {\mathfrak b}] \subseteq {\mathfrak b} \ \mbox{and} \  [J{\mathfrak b}, {\mathfrak a}] \not\subseteq {\mathfrak a}_J; \\
 \mbox{{\em degenerate}, \ \ \ if} \  [J{\mathfrak b}, {\mathfrak a}] \subseteq {\mathfrak a}_J.  \hspace{2.55cm}
\end{split} \right. 
\end{equation*}
Now suppose $({\mathfrak g},J)$ is equipped with a metric $g$. Let $x\in {\mathfrak b}\cap  {\mathfrak a}_J^{\perp}$ and $y\in {\mathfrak a}\cap  {\mathfrak b}^{\perp}$ be unit vectors, they are uniquely determined up to signs. Write $\dim_{ \mathbb R}{\mathfrak g}=2n$. Then by \cite{CaoZ}, a  unitary basis $\{ e_1, \ldots , e_n\}$ of ${\mathfrak g}^{1,0}$ is called an {\em admissible frame} of ${\mathfrak g}$, if 
\begin{equation*} \label{def}
{\mathfrak a}_J = \mbox{span}\{ e_j+\overline{e}_j, \, i(e_j-\overline{e}_j); \ 3\leq j\leq n\}, \ \ \ \ {\mathfrak a} ={\mathfrak a}_J + \mbox{span}\{x,y\}, 
\end{equation*} 
and 
\begin{equation*} \label{def2}
e_1=\frac{1}{\sqrt{2}}(x-iJx), \ \ \ Y= \frac{1}{\sqrt{2}}(y-iJy) = i\delta e_1 + \delta'e_2,
\end{equation*} 
where $\delta = \langle Jx, y\rangle \in (-1,1)$ and $\delta '=\sqrt{1-\delta^2} \in (0,1]$. The following is \cite[Lemma 4]{CaoZ}:

\begin{lemma} [ \cite{CaoZ} ]   \label{lemma2}
Let $({\mathfrak g},J,g)$ be a unimodular Hermitian Lie algebra which contains an abelian ideal ${\mathfrak a}$ of codimension $2$ such that $J{\mathfrak a}\neq  {\mathfrak a}$ and ${\mathfrak g}/{\mathfrak a}$ is non-abelian. Let $e$ be an admissible frame. Then the structure constants $C$ and $D$ satisfy 
\begin{eqnarray*}
&& C^{\ast}_{ij} = D^{\ast}_{\ast i} = C^{\alpha }_{\beta i} = D^i_{\alpha\beta} =0,   \label{eqCD1}\\
&& C^{\ast}_{1i}= \overline{D^i_{\ast 1}} , \ \ \ C^{\ast}_{2i}= \overline{D^i_{\ast 2}} -2t \overline{D^i_{\ast 1}}, \ \ \ C^{\ast}_{12}= \overline{D^2_{\ast 1}} -  \overline{D^1_{\ast 2}} + 2t \overline{D^1_{\ast 1}},  \label{eqCD2}
\end{eqnarray*}
for any $1\leq \alpha , \beta \leq 2$ and any $3\leq i,j\leq n$, any $1\leq \ast \leq n$. Here $t=\frac{i\delta}{\delta'}$.
\end{lemma}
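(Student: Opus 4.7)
The plan is to translate the three structural conditions --- $\mathfrak{a}$ abelian, $\mathfrak{a}$ an ideal in $\mathfrak{g}$, and $J$ integrable --- into coefficient identities in the admissible frame $e$. First I would record the explicit shape of $\mathfrak{a}^{\C}$: since $x=(e_1+\ol{e}_1)/\sqrt{2}$ and $\sqrt{2}\,y = i\delta(e_1-\ol{e}_1)+\delta'(e_2+\ol{e}_2)$ (from $Y+\ol Y=\sqrt{2}\,y$), one has $\mathfrak{a}^{\C}=\mathrm{span}_{\C}\{x,y,e_j,\ol{e}_j:3\le j\le n\}$, with $\mathfrak{a}^{1,0}:=\mathfrak{a}^{\C}\cap\mathfrak{g}^{1,0}=\mathrm{span}_{\C}\{e_j:j\ge 3\}$. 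A small but essential observation is that $\mathfrak{a}^{\C}\cap\ol{\mathfrak{g}^{1,0}}=\ol{\mathfrak{a}^{1,0}}$, because a purely $(0,1)$ element of $\mathfrak{a}^{\C}$ cannot carry a nonzero $x$- or $y$-component without simultaneously producing $(1,0)$ terms.

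The vanishings $C^{*}_{jk}=0$ and $D^{a}_{*j}=0$ for $a,j\ge 3$ follow immediately from the abelian identities $[e_j,e_k]=0$ and $[e_j,\ol{e}_k]=0$. To upgrade to $D^{\beta}_{*i}=0$ for $\beta\in\{1,2\}$ and $i\ge 3$, I would exploit the two remaining abelian relations $[x,e_i]=0$ and $[y',e_i]=0$, where $y':=\sqrt{2}\,y/\delta'=e_2+\ol{e}_2+t(e_1-\ol{e}_1)\in\mathfrak{a}^{\C}$. The first reads $[e_1,e_i]=[e_i,\ol{e}_1]$; since the left side is of type $(1,0)$ by integrability, the $(0,1)$-part of $[e_i,\ol{e}_1]$ must vanish, yielding $D^{1}_{*i}=0$, and matching $(1,0)$-parts gives $C^{*}_{1i}=\ol{D^{i}_{*1}}$. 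A parallel analysis of $[y',e_i]=0$, using these identities, produces $D^{2}_{*i}=0$ and $C^{*}_{2i}=\ol{D^{i}_{*2}}-2t\,\ol{D^{i}_{*1}}$; the factor $2t$ reflects the $\pm t$ coefficients on $e_1$ and $\ol{e}_1$ in $y'$.

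For $C^{\alpha}_{\beta i}=0$, observe that $[e_\beta,e_i]\in\mathfrak{a}^{\C}\cap\mathfrak{g}^{1,0}=\mathfrak{a}^{1,0}$, which contains no $e_1,e_2$ components. For $D^{i}_{\alpha\beta}=0$, the previous step gives $\ol{D^{\beta}_{ki}}=0$ for every $k$, so the $(1,0)$-part of $[e_\beta,\ol{e}_i]$ vanishes and the bracket is purely of type $(0,1)$; combined with $[e_\beta,\ol{e}_i]\in\mathfrak{a}^{\C}$ (ideal) and the identity $\ol{\mathfrak{g}^{1,0}}\cap\mathfrak{a}^{\C}=\ol{\mathfrak{a}^{1,0}}$, this forces the $\ol{e}_1,\ol{e}_2$ coefficients to vanish. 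The final formula $C^{*}_{12}=\ol{D^{2}_{*1}}-\ol{D^{1}_{*2}}+2t\,\ol{D^{1}_{*1}}$ is obtained by expanding the abelian relation $[x,y]=0$ in the $\{e_1,e_2,\ol{e}_1,\ol{e}_2\}$-basis and extracting the $(1,0)$-part, the $2t\,\ol{D^{1}_{*1}}$ contribution coming from the $i\delta(e_1-\ol{e}_1)$-content of $y$.

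The principal organizational difficulty is that $\mathfrak{a}^{\C}$ is not $J$-invariant, so one must carefully separate the $(1,0)$- and $(0,1)$-components of mixed brackets $[e_\alpha,\ol{e}_\beta]$; the admissible frame --- and in particular the normalization by $\delta'=\sqrt{1-\delta^{2}}$ which produces the universal parameter $t=i\delta/\delta'$ --- is precisely what makes this decomposition explicit and the resulting bookkeeping tractable.
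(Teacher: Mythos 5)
Your proposal is correct: each identity follows exactly as you describe from expanding the abelian relations $[x,e_i]=[y,e_i]=[x,y]=0$ and the ideal property $[\mathfrak{g}^{\mathbb C},\mathfrak{a}^{\mathbb C}]\subseteq\mathfrak{a}^{\mathbb C}$ in the admissible frame, together with the integrability condition $[\mathfrak{g}^{1,0},\mathfrak{g}^{1,0}]\subseteq\mathfrak{g}^{1,0}$ and the observation $\mathfrak{a}^{\mathbb C}\cap\mathfrak{g}^{1,0}=\mathrm{span}\{e_j: j\geq 3\}$; I verified the $(1,0)/(0,1)$ bookkeeping, including the coefficients $2t$ in $C^*_{2i}$ and $C^*_{12}$. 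The paper itself only cites this as Lemma 4 of \cite{CaoZ}, and your argument is the same direct computation carried out there.
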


So under an admissible frame $e$ all the  $C$ components can be expressed in terms of $D$ components, and the only possibly non-zero $D$ components  are the following:
\begin{equation*} \label{11}
D_{\alpha} = (D^{\,\ast}_{\ast \alpha}) = \left[ \begin{array}{ll} E_{\alpha} & 0 \\ V_{\alpha} & Y_{\alpha}  \end{array} \right] , \ \ \ \ \ E_{\alpha} = \left[ \begin{array}{ll} D^{\,1}_{1\alpha} &  D^{\,2}_{1\alpha} \\  D^{\,1}_{2\alpha} &  D^{\,2}_{2\alpha}  \end{array} \right] , \ \ \ \ \ V_{\alpha} = [v^1_{\alpha} , v^2_{\alpha}] ,\ \ \ \ 
\end{equation*}
where $1\leq \alpha, \beta \leq 2$, with each $v^{\beta}_{\alpha}$ being a column vector in ${\mathbb C}^{n-2}$ while each $Y_{\alpha} =(D^{j}_{i\alpha})$ being an $(n-2)\times (n-2)$ matrix. The exact value of the entries of $E_1$ and $E_2$ are determined by the real constants $a,b,c,c',d', \sigma$ defined by:

\begin{equation} \label{abcd}
\left\{ \begin{split} 
[Jx, x] =  ax + by + {\mathfrak a}_J ;\hspace{1.1cm}\\ 
[Jx, y] =  cx -ay + {\mathfrak a}_J ;\hspace{1.1cm} \\ 
\ [Jy, x] =  (c-\sigma )x -ay + {\mathfrak a}_J ; \ \, \\
\ [Jy, y] =  c'x + d'y + {\mathfrak a}_J; \hspace{0.95cm}\\
[Jx,Jy] = \sigma Jx + {\mathfrak a}, \hspace{1.6cm}
\end{split}
\right.
\end{equation}
where $\sigma \neq 0$, $a^2+bc=0$,  and 
\begin{equation} \label{abcd1}
\left\{ \begin{split} \,bc' = -a (c+\sigma ) ; \hspace{0.87cm} \\
\ bd' = 2a^2 +bc - 2b \sigma ;\ \,\\
2ac' + c d' = c^2 . \hspace{0.97cm} 
\end{split} \right.
\end{equation}
Note that the middle equation can be simplified by the condition $a^2+bc=0$, but we keep it in this form so it could be used again in the next case. Using these constants, we know that $J$ is generic $\Longleftrightarrow$ $b\neq 0$; $J$ is half-generic $\Longleftrightarrow$ $b=0$, $c\neq 0$; while $J$ is degenerate $\Longleftrightarrow$ $b=c=0$, thus one has
\begin{equation} \label{abcd3}
\left\{ \begin{split} \mbox{When} \ J \ \mbox{is generic}: \  \  \ b\neq 0, \ c=-\frac{a^2}{b}, \ c'=-\frac{a}{b}(c+\sigma ), \ d'= -c -2\sigma ;   \ \ \ \ \\
\mbox{When} \ J \ \mbox{is half-generic}: \ \   c\neq 0, \ a=b=0, \ d'=c, \ \mbox{and} \ c' \ \mbox{is artbitrary}; \ \ \\
\mbox{When} \ J \ \mbox{is degenerate}:  \ \ \  a=b=c= 0, \ c' \ \mbox{and} \ \, d' \ \mbox{are artbitrary.} \hspace{1.2cm}
\end{split} \right.  
\end{equation}
By formula (16) - (23) and Lemma 7 (note that our $Y_1$ may no longer be nilpotent here) of  \cite{CaoZ}, we have
\begin{eqnarray}
&& C^1_{12} \ = \   -\frac{\ i\sigma }{\sqrt{2}\delta'}, \ \ \ C^2_{12}=0,    \label{C12}\\
&& D^1_{11} \ = \  \frac{b\delta +ia}{\sqrt{2}} , \ \ \ \ D^1_{21} \ = \  \frac{\,ib\delta'}{\sqrt{2}} .  \label{D1} \\
 && D^2_{11} \ = \ \frac{1}{\sqrt{2}\delta'} (-2a\delta +ic+ib\delta^2), \ \ \ D^2_{21} \ = \  - \frac{1}{\sqrt{2}} (b\delta + ia ) , \label{D2}\\
&& D^1_{12} \ = \ \frac{1}{\sqrt{2}\delta'} i(c-\sigma -b\delta^2),  \ \ \ \ \ \ D^1_{22} \ = \ \frac{1}{\sqrt{2}}(b\delta -ia).  \label{D3} \\
&& D^2_{12} \ = \ \frac{1}{\sqrt{2}\delta'^2} \big( i(c'+a\delta^2) + \delta (d'+b\delta^2+ \sigma ) \big) , \ \ \ \ D^2_{22} \ = \   \frac{1}{\sqrt{2}\delta'} i(d'+ b\delta^2 ) . \label{D4}  \\
  && [E_1, E_2] \ = \ \frac{i\sigma}{\sqrt{2}\delta'} E_1, \ \ \ \ \  \ \ \ [Y_1, Y_2] \ = \ \frac{i\sigma}{\sqrt{2}\delta'} Y_1,   \label{Jacobi1}\\
  && V_1E_2+Y_1V_2-V_2E_1 - Y_2V_1 \ = \ \frac{i\sigma}{\sqrt{2}\delta'} V_1, \label{Jacobi2} \\
  && \mbox{tr}(Y_2)  - \overline{\mbox{tr}(Y_2) } + 2t \, \overline{\mbox{tr}(Y_1)} = \ \frac{i}{\sqrt{2}\delta' } \big( 2 \sigma - d' -c \big).  \label{unimo}
\end{eqnarray}

\vspace{0.1cm}

\noindent {\bf Case 2:} $J{\mathfrak a} \neq {\mathfrak a}$ and ${\mathfrak g}/{\mathfrak a}$ is abelian.

\vspace{0.2cm}

Recall that the rank of  $({\mathfrak g},J)$ (or for simplicity, just $J$) is defined as  $r_0=\dim_{\mathbb R}\{({\mathfrak g}'+{\mathfrak a}_J)/{\mathfrak a}_J\}$, where ${\mathfrak g}'=[{\mathfrak g}, {\mathfrak g}]$. It is an integer between $0$ and $2$. So in this case $({\mathfrak g},J)$ are again divided into three mutually disjoint subcases: $r_0=0$, $r_0=1$, or $r_0=2$. 

Take any $V\cong {\mathbb R}^2$ so that ${\mathfrak a}={\mathfrak a}_J \oplus V$, then ${\mathfrak g}={\mathfrak a} \oplus JV$. If $\{x,y\}$ is a basis of $V$, then all the non-trivial Lie bracket info are given by $Jx$ and $Jy$, and $[Jx,Jy]\in {\mathfrak a}_J$ by the integrability of $J$. Replace $\{x,y\}$  by another basis of $V$ if necessary, we may always assume that the matrix $\mbox{ad}_{Jx}|_V$ has zero trace. Under this assumption, one can introduced the real constants $a,b,c,c',d'$ as in the previous case, obeying (\ref{abcd}) and (\ref{abcd1}). 

Now if $g$ is a metric compatible with $J$, then by choosing $V= {\mathfrak a}\cap {\mathfrak a}_J^{\perp}$ and $\{ x, y\}$ an orthonormal basis of $V$ such that $\mbox{ad}_{Jx}|_V$ is trace free, we get an {\em admissible frame} in exactly the same way as in the previous case, and the formula for structure constants (\ref{C12}) through (\ref{unimo}) are all the same, with the only difference being that $\sigma =0$ and we no longer have $a^2+bc=0$.

\vspace{0.3cm}

\section{The proof of Proposition \ref{thm1}}

Let ${\mathfrak g}$ be a unimodular Lie algebra containing a codimension $2$ abelian ideal ${\mathfrak a}$. Let $J$ be a complex structure on ${\mathfrak g}$ so that $J{\mathfrak a} \neq {\mathfrak a}$. The goal of this section is to prove Proposition \ref{thm1}, namely, to show that $({\mathfrak g},J)$ can  not admit any Hermitian-symplectic metric, unless it admits a K\"ahler metric.

Assume that $g$ is a Hermitian-symplectic metric on $({\mathfrak g},J)$. Then under any unitary frame $e$, by Lemma \ref{lemma1} we know that there exists a skew-symmetric matrix $S$ so that the two equations of $(\ref{eq:HS})$ hold. If we choose $e$ to be admissible, then by Lemma \ref{lemma2} the structure constants $C$ and $D$ satisfy all those restrictions, and we have 

\begin{lemma}\label {lemma3}
Let $({\mathfrak g},J,g)$ be a unimodular Lie algebra with Hermitian structure and ${\mathfrak a}\subseteq {\mathfrak g}$ an abelian ideal of codimension $2$ with $J{\mathfrak a} \neq {\mathfrak a}$. Let $e$ be an admissible frame. If $g$ is Hermitian-symplectic, then there exists a constant $p$, two column vectors $u_1$, $u_2\in {\mathbb C}^{n-2}$, and a skew-symmetric $(n-2)\times (n-2)$ matrix $S$ so that
\begin{eqnarray}
&& Y_1-Y_1^{\ast}=0, \ \ \ Y_2 -Y_2^{\ast}+2t\,Y_1^{\ast} =0;  \label{L1}\\
&& v^2_1-v^1_2 -2t \,v^1_1 =0; \label{L2} \\
&& Y_{\alpha}^{\ast}S+ S\overline{Y}_{\alpha} =0,  \ \ \ \ 1\leq \alpha \leq 2;  \label{L3} \\
&& \sum_{\gamma =1}^2 \overline{D^{\beta}_{\gamma \alpha} } \,u_{\gamma} + S \,\overline{v^{\beta}_{\alpha}} + Y^{\ast}_{\alpha}u_{\beta} = \sqrt{-1} \,v^{\alpha}_{\beta} , \ \ \ 1\leq \alpha, \beta \leq 2; \label{L4} \\
&& -p \cdot \overline{\mbox{tr}(E_{\alpha})} + \langle u_2, \overline{v^1_{\alpha}} \rangle - \langle u_1, \overline{v^2_{\alpha}} \rangle = - \sqrt{-1} \,T^{\alpha}_{12}, \ \ \ 1\leq \alpha \leq 2; \label{L5} \\
&&   C^1_{1 2}u_1 + C^2_{12} u_2 - (Y^{\ast}_2 -2t Y^{\ast}_1)u_1 + Y^{\ast}_1 u_2 = 0.   \label{L6}
\end{eqnarray}
\end{lemma}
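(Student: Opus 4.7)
The plan is to apply Lemma \ref{lemma1} in the admissible frame $e$ and then to expand the two equations of $(\ref{eq:HS})$ one index sector at a time. Lemma \ref{lemma1} produces a skew-symmetric $n\times n$ matrix $\tilde S$ satisfying $(\ref{eq:HS})$. The admissible-frame index split $\{1,2\}\sqcup\{3,\ldots,n\}$ induces a natural block decomposition of $\tilde S$: the upper-left $2\times 2$ skew block has a single independent entry $p:=\tilde S_{12}$; the $2\times(n-2)$ upper-right block supplies two column vectors $u_1,u_2\in\mathbb C^{n-2}$ (the first and second rows, transposed); and the lower-right $(n-2)\times(n-2)$ skew block is the matrix $S$ of the statement. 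Skew-symmetry of $\tilde S$ then determines the remaining block automatically.

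With this decomposition in hand, I substitute Lemma \ref{lemma2}'s vanishing pattern $C^*_{ij}=D^*_{*i}=C^\alpha_{\beta i}=D^i_{\alpha\beta}=0$ (for Latin $i,j$ and Greek $\alpha,\beta$) together with the $C$-to-$D$ expressions $C^*_{1i}=\overline{D^i_{*1}}$, $C^*_{2i}=\overline{D^i_{*2}}-2t\overline{D^i_{*1}}$, and $C^*_{12}=\overline{D^2_{*1}}-\overline{D^1_{*2}}+2t\overline{D^1_{*1}}$ into $(\ref{eq:HS})$, sorting the resulting identities by the Greek/Latin type of each free index. Different sectors produce the six stated relations. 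The identities $(\ref{L1})$ and $(\ref{L2})$ come from sectors of the second equation of $(\ref{eq:HS})$ in which the $\tilde S$-contracted left-hand side automatically vanishes because of the $D^*_{*i}=0$ zeros; concretely, $(i,j,k)=(\alpha,m,\ell)$ with $\alpha\in\{1,2\}$ and $m,\ell$ Latin yields $(\ref{L1})$ after recognizing the resulting constraint $T^\alpha_{m\ell}=0$ as the Hermiticity relation on $Y_1,Y_2$ via the $C$-to-$D$ rewrite, and $(i,j,k)=(1,m,2)$ with $m$ Latin yields $(\ref{L2})$ from $T^m_{12}=0$ (using that $t$ is purely imaginary, so $\bar t=-t$). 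Identity $(\ref{L3})$ is the $(i,\alpha,k)$ sector of the second equation with $i,k$ Latin and $\alpha$ Greek, which involves only the lower-right block of $\tilde S$ and reduces to the statement that $S\overline Y_\alpha$ is symmetric, equivalently $Y^*_\alpha S+S\overline Y_\alpha=0$. Identity $(\ref{L4})$ is the Greek-Greek-Latin sector of the second equation, where $u_1,u_2$, the blocks $E_\alpha,V_\alpha,Y_\alpha$, and $S$ all appear, and where the right-hand side $\sqrt{-1}\,v^\alpha_\beta$ emerges from the $C$-to-$D$ rewrite of the relevant torsion component. Identity $(\ref{L5})$ comes from the first equation of $(\ref{eq:HS})$ in the all-Greek sector $(i,j,k)=(\alpha,1,2)$, where the trace $\overline{\mathrm{tr}(E_\alpha)}$ appears paired with the single scalar $p$ and the right-hand side is recognized as $-\sqrt{-1}\,T^\alpha_{12}$. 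Finally, $(\ref{L6})$ is the first equation with $(i,j,k)=(1,2,\ell)$ for $\ell$ Latin; after substituting $C^1_{12},C^2_{12}$ from $(\ref{C12})$ and the $C$-to-$D$ rewrites, the $S$-contracted terms collapse by $(\ref{L2})$ and what survives is the stated vector equation in $\mathbb C^{n-2}$.

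The only genuine obstacle is the bookkeeping: tracking signs and the skew-symmetry of $\tilde S$, handling the complex-conjugation patterns, keeping the factor $t=i\delta/\delta'$ (with $\bar t=-t$) in the correct places, and verifying that each torsion component on the right of $(\ref{eq:HS})$ reduces to the compact form shown in $(\ref{L1})$--$(\ref{L5})$ through the $C$-to-$D$ substitution. Because so many structure constants already vanish in an admissible frame, the sums collapse automatically and each of $(\ref{L1})$--$(\ref{L6})$ is read off from exactly one index sector; no deeper step is needed beyond this sector-by-sector enumeration.
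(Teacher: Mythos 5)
Your overall strategy is exactly the paper's: block-decompose the $n\times n$ skew-symmetric matrix $\tilde S$ produced by Lemma \ref{lemma1} according to the Greek/Latin index split of the admissible frame, substitute the vanishing pattern and the $C$-to-$D$ relations of Lemma \ref{lemma2}, and read off (\ref{L1})--(\ref{L6}) sector by sector. Your sector assignments for (\ref{L1}), (\ref{L2}), (\ref{L3}), (\ref{L4}) and (\ref{L6}) agree with the paper's, including the key observations that the $j$-Latin sectors of the second equation have identically vanishing left-hand side (so they reduce to torsion constraints) and that (\ref{L6}) uses (\ref{L2}) to kill the $S$-contracted terms.

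The one step that would fail as written is your derivation of (\ref{L5}). You attribute it to ``the first equation of (\ref{eq:HS}) in the all-Greek sector $(i,j,k)=(\alpha,1,2)$.'' But the first equation of (\ref{eq:HS}) is totally antisymmetrized in $i,j,k$ and has no torsion term on its right-hand side; since only two Greek indices are available, any all-Greek choice repeats an index and the identity is vacuous ($0=0$), and in any case it involves $C$ rather than $\overline D$, so it could never produce $-\sqrt{-1}\,T^{\alpha}_{12}$ or the conjugated trace $\overline{\mathrm{tr}(E_\alpha)}$. In the paper, (\ref{L5}) comes from the \emph{second} equation of (\ref{eq:HS}) with $i=1$, $k=2$, $j=\alpha$: the Greek part of the $r$-sum contracts the scalar $p$ against $\overline{D^1_{1\alpha}}+\overline{D^2_{2\alpha}}=\overline{\mathrm{tr}(E_\alpha)}$, while the Latin part gives $\langle u_2,\overline{v^1_\alpha}\rangle-\langle u_1,\overline{v^2_\alpha}\rangle$. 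A smaller slip: in your (\ref{L1}) sector $(i,j,k)=(\alpha,m,\ell)$ the constraint produced is $T^{m}_{\alpha\ell}=0$ (Latin upper index), not $T^{\alpha}_{m\ell}=0$; only the former encodes the Hermiticity relations on $Y_1,Y_2$. With these corrections the sector-by-sector enumeration goes through exactly as you describe.
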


\begin{proof} Since $g$ is Hermitian-symplectic, by Lemma \ref{lemma1} there will be a skew-symmetric $n\times n$ matrix, which we will denote by $\tilde{S}$, so that the two equations in (\ref{eq:HS}) hold. Write
$$ \tilde{S}  =  \left[ \begin{array}{ccc} 0 & -p & -\,^t\!u_1 \\ p & 0 & -\,^t\!u_2 \\ u_1 & u_2 & S \end{array} \right] , $$ 
where $S$ is the lower right $(n-2)\times (n-2)$ corner.  Since $D^{\ast}_{\ast j}=0$ for any $3\leq j\leq n$, the second equation of (\ref{eq:HS}) gives us 
$$ 0 = T^j_{ab}=-C^j_{ab}-D^j_{ab} + D^j_{ba} =0, \ \ \ \  \forall \ 1\leq a,b\leq n. $$
Let us choose $a=i$ and $b=\alpha$, or $a=1$ and $b=2$, we get by Lemma \ref{lemma2} that
$$ Y_1-Y_1^{\ast} =0, \ \ \ Y_2 -Y_2^{\ast}+2tY_1^{\ast} =0, \ \ \ v^2_1-v^1_2 -2t v^1_1 =0. $$
Here $\ast$ stands for conjugate transpose, and we have used the fact that $\bar{t}=-t$ since $t$ is pure imaginary. This gives (\ref{L1}) and (\ref{L2}). By letting $j=\alpha \in \{ 1,2\}$ and $i, k\in \{ 3, \ldots , n\}$ in the second equation of (\ref{eq:HS}), we get (\ref{L3}). Similarly, by letting $j=\alpha$, $k=\beta \in \{ 1,2\}$ and $i\in \{ 3, \ldots , n\}$ in (\ref{eq:HS}), we get (\ref{L4}). Also, taking $j=\alpha\in \{ 1,2\}$ and $i=1$, $k=2$ in (\ref{eq:HS}), we get (\ref{L5}). For the first equation in (\ref{eq:HS}), note that the three indices $i$, $j$, $k$ need to be distinct, otherwise the identity holds automatically. When $i,j,k$ are all in $\{ 3, \ldots , n\}$, the equation holds trivially. When $i,j\in \{ 3, \ldots , n\}$ and $k=\alpha \in \{ 1,2\}$, we recover (\ref{L3}) again. When $i\geq 3$ but $j=1$, $k=2$, and utilizing (\ref{L2}), we get (\ref{L6}). This completes the proof of the lemma. 
\end{proof}

Now we are ready to prove Proposition \ref{thm1} in the case when ${\mathfrak g}/{\mathfrak a}$ is non-abelian. 

\begin{proof}[{\bf Proof of Proposition \ref{thm1} for the $J{\mathfrak a}\neq {\mathfrak a}$ and ${\mathfrak g}/{\mathfrak a}$ non-abelian case.}] 

Since  $\sigma \neq 0$, the second equation of (\ref{Jacobi1}) implies that $Y_1$ must be nilpotent. But $Y_1$ is also Hermitian symmetric by (\ref{L1}), thus it must be zero: $Y_1=0$. So by (\ref{L1}) again we get $Y_2=Y_2^{\ast}$. Now (\ref{unimo}) yields 
$$ d'=2\sigma-c. $$
Compare this with (\ref{abcd3}), we know that $J$ cannot be generic as $\sigma \neq 0$, while in the other two cases by (\ref{abcd1}) and (\ref{C12}) through (\ref{D4}) we obtain:
\begin{equation} \label{H}
\mbox{(half-generic case):} \ \ \ a=b=0,\ \  c=d'=\sigma , \ \  E_1 = \left[ \begin{array}{cc} 0 & q \\ 0 & 0 \end{array} \right] , \ \ E_2 = \left[ \begin{array}{cc} 0 & q' \\ 0 & q \end{array} \right], 
\end{equation}
where $c'$ is arbtrary, $q=\frac{i\sigma}{\sqrt{2}\delta'}$, $q'=\frac{1}{\sqrt{2}\delta'^2}(ic' + 2\sigma \delta )$, and 
\begin{equation*} \label{D}
\mbox{(degenerate case):} \ \ \ a=b=c=0,\ \  d'=2\sigma , \ \  E_1 = 0 , \ \ E_2 = \left[ \begin{array}{cc} -q & q'' \\ 0 & 2q \end{array} \right], 
\end{equation*}
where $c'$ is again arbtrary and  $q''=\frac{1}{\sqrt{2}\delta'^2}(ic' + 3\sigma \delta )$. In both of these cases we have $D^1_{11}=D^1_{21}=0$, thus by (\ref{L4}) with $\alpha =\beta =1$, we get
$$ S\,\overline{v^1_1} = \sqrt{-1}\,v^1_1 . $$
Multiply by $(v^1_1)^{\ast}$ from the left on both sides, and using the fact that $S$ is skew-symmetric, we get $0=\sqrt{-1}\,|v^1_1|^2$, hence $v^1_1=0$. By (\ref{L2}), we get $v^2_1=v^1_2$, and let us denote it by $\xi$.  

In the degenerate case, we have $E_1=0$. By letting $\alpha=1$, $\beta =2$ in (\ref{L4}) we get $S\overline{\xi} =\sqrt{-1}\, \xi$. Multiplying by $\xi^{\ast}$ from the left we conclude that  $\xi=0$.  Now by the $\alpha =1$ case of (\ref{L5}) we end up with $T^1_{12}=0$, which is a contradiction since
$$ T^1_{12}=-C^1_{12} - D^1_{12} + D^1_{21} = q +q +0 = 2q = \frac{2i\sigma}{\sqrt{2}\delta'} \neq 0. $$
Therefore the degenerate case is ruled out, and we will assume that we are in the half-generic case from now on. In this case, since $Y_1=0$, the equation (\ref{Jacobi2}) gives us 
$$ V_1E_2-V_2E_1 -Y_2V_1 = - C^1_{12}V_1 = qV_1. $$   
But $V_1=(0,\xi )$, $V_2=(\xi, v^2_2)$, and $E_1$, $E_2$ are given by (\ref{H}), so we end up with $Y_2\xi = -q\xi$. Since $q\neq 0$ is pure imaginary, while $Y_2$ is Hermitian-symmetric thus only have real eigenvalues, therefore we must have $\xi =0$. So by the $\alpha =1$ case of (\ref{L5}) we end up with $T^1_{12}=0$, which is again  a contradiction as
$$ T^1_{12}=-C^1_{12} - D^1_{12} + D^1_{21} = q +0 +0 = q = \frac{i\sigma}{\sqrt{2}\delta'} \neq 0. $$
We have thus completed the proof of Proposition \ref{thm1} in the ${\mathfrak g}/{\mathfrak a}$ non-abelian case.
\end{proof}

We are now left with the ${\mathfrak g}/{\mathfrak a}$ abelian case. Assuming that $g$ is Hermitian-symplectic, then by (\ref{L1}) and  (\ref{unimo}) we obtain $ d'=2\sigma -c = -c$, since $\sigma =0$ now. By (\ref{abcd1}), we have
\begin{equation*}
bc'=-ac, \ \ \ \ \ \ a^2+bc=0, \ \ \ \ \ \ ac'=c^2. 
\end{equation*}
We will divide the discussion into the following three subcases.

\vspace{0.15cm}

{\em Subcase 1:} $c'=b=0$.

In this case, all six numbers $a,b,c,c',d', \sigma$ are zero, hence $E_1=E_2=0$ and we are in the $r_0=0$ situation.

\vspace{0.15cm}

{\em Subcase 2:} $c'=0$, $b\neq 0$.

In this case, $b$ is the only non-zero number among the six constants. We have 
\begin{equation*}
E_1 = \kappa \left[ \begin{array}{cc} -t & -t^2 \\ 1 & t  \end{array} \right] , \ \ \ \ E_2 = -t E_1, \ \ \ \  \ \ \ \ \mbox{where} \ \ t=\frac{i\delta}{\delta'}, \ \, \kappa = \frac{ib\delta'}{\sqrt{2}} \neq 0.
\end{equation*}

\vspace{0.15cm}

{\em Subcase 3:}  $c'\neq 0$. 

In this case we have $ a= \frac{c^2}{c'}$, $b=-\frac{c^3}{c'^2}$, and
\begin{equation*}
 E_1= -\frac{ic\delta'}{\overline{\mu}} E_2, \ \ \ \ E_2 =  \frac{\overline{\mu}}{\sqrt{2}c'^2\delta'^2}     \left[ \begin{array}{cc} ic\delta'\mu  & \mu^2 \\ c^2\delta'^2 & -ic\delta' \mu  \end{array} \right] , \ \ \ \ \mu = ic'-c\delta \neq 0.
\end{equation*}
Note that in both of the last two subcases, we have $r_0=1$. The presence of Hermitian-symplectic metric rules out the $r_0=2$ case already. Let us deal with Subcase 2 first. Perform a unitary change of the frame $e$:
$$ \tilde{e}_i= e_i, \ (3\leq i\leq n), \ \ \ \ \left[ \begin{array}{c} \tilde{e}_1 \\ \tilde{e}_2  \end{array} \right] = U  \left[ \begin{array}{c} e_1 \\ e_2  \end{array} \right]  = \delta' \left[ \begin{array}{cc} 1 & t  \\ t & 1  \end{array} \right] \left[ \begin{array}{c} e_1 \\ e_2  \end{array} \right] , $$
Since $  D^{\gamma}_{\alpha \beta} = \overline{\varphi}_{\alpha} ( [ \overline{e}_{\gamma} , e_{\beta}])$, we deduce that
$$ \tilde{E}_{\beta} = (\tilde{D}^{\gamma}_{\alpha \beta} ) = \sum_{\beta_1} U_{\beta \beta_1} (U E_{\beta_1} U^{\ast}) = (U_{\beta 1}\kappa - U_{\beta 2} t\kappa ) \frac{1}{\delta'^2} \left[ \begin{array}{cc} 0 & 0  \\ 1 & 0  \end{array} \right] .$$
Therefore,
$$ \tilde{E}_1 = \frac{\kappa}{\delta'^4} \left[ \begin{array}{cc} 0 & 0  \\ 1 & 0  \end{array} \right]  , \ \ \ \ \ \ \ \tilde{E}_2 =0.  $$
So under this new admissible frame, which for simplicity let us drop the tilde sign, the only non-trivial components for $D^{\gamma}_{\alpha \beta }$ is $D^1_{21}\neq 0$, while we still maintain $C^1_{12}=C^2_{12}=0$. Hence by the structure equation (\ref{structure}) and Lemma \ref{lemma2}, we have
$$ d \varphi_2 = - \overline{D^1_{21}} \, \varphi_1 \wedge \overline{\varphi}_1.  $$
In particular, $i\varphi_1 \wedge \overline{\varphi}_1$ is a non-negative $(1,1)$-form which is $d$-exact and non-trivial, contradicting to the existence of Hermitian-symplectic metrics by Lemma \ref{lemma1b}. Therefore we know that this subcase can not occur. 

Likewise, subcase 3 can be dealt with analogously, by choosing 
$$ U = \frac{1}{\lambda}  \left[ \begin{array}{cc} ic\delta'  & \mu  \\ -\overline{\mu} & -ic\delta'  \end{array} \right],   $$
where $\lambda = \sqrt{ c'^2+c^2}>0$. After the frame change, the only non-zero components of  $D^{\gamma}_{\alpha \beta }$ is $D^1_{21}\neq 0$ once again, 
 and the rest of the argument go through verbatim. So we are only left with the $r_0=0$ case.

\begin{proof}[{\bf Proof of Proposition \ref{thm1} for the $J{\mathfrak a}\neq {\mathfrak a}$ and ${\mathfrak g}/{\mathfrak a}$ abelian case.}] 
Suppose $({\mathfrak g}, J, g)$ is a unimodualr Hermitian Lie algebra with $g$ being Hermitian-symplectic and ${\mathfrak g}$ containing a codimension $2$ ableian ideal ${\mathfrak a}$ such that $J{\mathfrak a} \neq {\mathfrak a}$ and ${\mathfrak g}/{\mathfrak a}$ abelian. From the discussion above, we know that we only need to deal with the $r_0=0$ case, namely, we have $C^1_{12}=C^2_{12}=0$ and $E_1=E_2=0$. Since  $[Y_1, Y_2]=0$, by (\ref{L1}) we know that with an appropriate  unitary change of $\{ e_3, \ldots , e_n\}$ we may assume that 
$$ Y_1 = \Lambda_1, \ \ \ Y_2 = \Lambda_2 -t \Lambda_1, $$
where $\Lambda_1$, $\Lambda_2$ are diagonal, real matrices.

Equations (\ref{L3}) now give us 
$$ \Lambda_{\alpha} S+ S\Lambda_{\alpha} =0, \ \ \ \ \ 1\leq \alpha \leq 2. $$
Since $Y_{\alpha}$ or $Y^{\ast}_{\alpha}$ are linear combinations of $\Lambda_1$ and $\Lambda_2$, we have
$$  Y_{\alpha} S + SY_{\alpha} =0, \ \ \ \ Y^{\ast}_{\alpha} S + SY^{\ast}_{\alpha} =0. $$
Therefore,
\begin{equation*} \label{YScommute}
 Y^{\ast}_{\alpha} (S^{\ast} S)= (SY_{\alpha})^{\ast} S = - (Y_{\alpha}S)^{\ast}S=    - S^{\ast}Y^{\ast}_{\alpha}S = (S^{\ast} S)Y^{\ast}_{\alpha} , \ \ \ \ 1\leq \alpha \leq 2. 
 \end{equation*}
Similarly, $Y_{\alpha} (S^{\ast} S)= (S^{\ast} S) Y_{\alpha}$ for $1\leq \alpha \leq 2$. 
Now the equation (\ref{L4}) becomes
$$ S \overline{ v^{\beta}_{\alpha} } + Y^{\ast}_{\alpha} u_{\beta} = i\,v^{\alpha}_{\beta}, \ \ \ \ 1\leq \alpha, \beta \leq 2. $$ 
Since $S$ is skew-symmetric, by multiplying $S^{\ast}$ from the left onto the above equation, we get
\begin{eqnarray*}
S^{\ast}S \,\overline{ v^{\beta}_{\alpha} } & = &  S^{\ast} (i\,v^{\alpha}_{\beta} - Y^{\ast}_{\alpha} u_{\beta}) \ = \  -i \, \overline{(S \, \overline{v^{\alpha}_{\beta} })} - S^{\ast} Y^{\ast}_{\alpha} u_{\beta} \\
& = & -i \,\overline{( i  \,v^{\beta}_{\alpha}  - Y^{\ast}_{\beta}u_{\alpha}  ) } - S^{\ast} Y^{\ast}_{\alpha} u_{\beta} \\
& = & - \overline{v^{\beta}_{\alpha}} + i \,^t\!Y_{\beta} \,\overline{u}_{\alpha} + Y^{\ast}_{\alpha} S^{\ast}u_{\beta}.
\end{eqnarray*}
Here on the second line we used (\ref{L4}) again. Since $I+S^{\ast}S>0$, the above gives us
\begin{equation} \label{expressv}
  \overline{ v^{\beta}_{\alpha} } =  (I+S^{\ast}S)^{-1} ( i Y_{\beta} \overline{u}_{\alpha} + Y^{\ast}_{\alpha} S^{\ast}u_{\beta})= Y_{\beta}(i(I+S^{\ast}S)^{-1}\overline{u}_{\alpha}) + Y^{\ast}_{\alpha} ( (I+S^{\ast}S)^{-1}S^{\ast}u_{\beta} ).
\end{equation}
For each $\alpha =1,2$, as $Y_{\alpha}$ is diagonal, the image space of $Y_{\alpha}$ and $Y^{\ast}_{\alpha}$ coincide: $\mbox{Im}(Y_{\alpha})= \mbox{Im}(Y^{\ast}_{\alpha})$.  Also, since $t$ is pure imaginary, when $t\neq 0$ we have $\mbox{Im}(Y_1) \subseteq \mbox{Im}(Y_2)$. Equation (\ref{expressv}) tells us that 
$$v^1_1\in \mbox{Im}(Y_1), \ \ \ \ \ v^2_2\in \mbox{Im}(Y_2). $$
Also, by (\ref{L6}), which now reads $(Y^{\ast}_2-2t Y^{\ast}_1)u_1= Y^{\ast}_1u_2$, we get $Y^{\ast}_2u_1 = Y^{\ast}_1(u_2+2tu_1)$. Plug it into (\ref{expressv}) for $\alpha =1$ and $\beta =2$, we get $v^2_1 \in \mbox{Im}(Y_1)$. Now by (\ref{L2}), $ v^1_2 = v^2_1-2tv^1_1 $, so it belongs to $\mbox{Im}(Y_1)$. Note that when $t\neq 0$, $\mbox{Im}(Y_1) \subseteq \mbox{Im}(Y_2)$, so we have 
\begin{equation} \label{vinimageY}
v^{\beta}_{\alpha}\in \mbox{Im}(Y_{\alpha}), \ \ \ \ \forall \ 1\leq \alpha , \beta \leq 2. 
\end{equation}
If $t=0$, then (\ref{L6}) gives us $Y^{\ast}_1 u_2 = Y^{\ast}_2 u_1$, so by plugging it into  (\ref{expressv}) for $\alpha =2$ and $\beta =1$, we get $v^1_2 \in \mbox{Im}(Y_2)$. So (\ref{vinimageY}) always holds. Let us express (\ref{vinimageY}) as
$$ v^{\beta}_{\alpha} = Y_{\alpha} \overline{a}_{\beta}, \ \ \ \ 1\leq \alpha , \beta \leq 2, \ \ a_{\beta} =\,^t\!(a_{\beta3}, \ldots , a_{\beta n})\in {\mathbb C}^{n-2}. $$
Let 
$$\tilde{e}_i = e_i, \ \ (3\leq i\leq n), \ \ \  \tilde{e}_{\alpha} =e_{\alpha} + \sum_{i=3}^n a_{\alpha i} e_i, \ \ (1\leq \alpha \leq 2). $$
Use $\{ \tilde{e}_1, \ldots , \tilde{e}_n\}$ to be a unitary frame, we obtain a new Hermitian metric $\tilde{g}$ on $({\mathfrak g}, J)$, and by formula (41) in \cite{CaoZ} we have
$$ \tilde{Y}_{\alpha} =Y_{\alpha}, \ \ \ \tilde{v}^{\beta}_{\alpha } = v^{\beta}_{\alpha} - Y_{\alpha} \overline{a}_{\beta} = 0. $$
So by \cite[Lemma 16]{CaoZ} we know that $\tilde{g}$ is K\"ahler. This completes the proof of Proposition \ref{thm1}. 
\end{proof}

\vspace{0.3cm}

\vs

\noindent\textbf{Acknowledgments.} The second named author would like to thank Bo Yang and Quanting Zhao for their interests and/or helpful discussions. 

\vs

\end{document}